\newtheorem{theorem}{Theorem}[section]
\newtheorem{definition}[theorem]{Definition}
\newtheorem{proposition}[theorem]{Proposition}
\newtheorem{remark}[theorem]{Remark}
\newtheorem{corollary}[theorem]{Corollary}
\numberwithin{equation}{section}
\def\cp{\mathcal{P}}
\def\cw{\mathcal{W}}
\def\be{\mathbb{E}}
\def\nn{{n\rightarrow\infty}}
\def\vp{\varphi}
\def\ve{\varepsilon}
\begin{document}

\title[Generalized Wasserstein distance and weak convergence of sublinear expectations]{Generalized Wasserstein distance \\and weak convergence of sublinear expectations}
\author{Xinpeng Li}
\author{Yiqing Lin}
\address{Institute for Advanced Research and School of Mathematics\newline\indent
Shandong University\newline\indent
250100, Jinan, China}
\email{lixinpeng@sdu.edu.cn}
\address{Fakult\"{a}t f\"{u}r Mathematik\newline\indent
 Universit\"{a}t Wien\newline\indent
1090 Wien, Austria}
\email{yiqing.lin@univie.ac.at}


\subjclass[2000]{60A10}

\keywords{sublinear expectations; weak convergence; Kantorovich-Rubinstein duality formula; Wasserstein distance}

\begin{abstract}
In this paper, we define the generalized Wasserstein distance for sets of Borel probability measures and demonstrate that the weak convergence of sublinear expectations can be characterized by means of this distance.
\end{abstract}
\maketitle
\section{Introduction}
\noindent In classical probability theory, the Wasserstein distance of order $p$ between two Borel probability measures $\mu$ and $\nu$ in the Polish metric space $(\Omega,d)$ is defined by
$$W_p(\mu,\nu)=\inf\{E[d(X,Y)^p]^{\frac{1}{p}}:\ \text{law}(X)=\mu, \ \text{law(Y)}=\nu\},$$
which is related to optimal transport problems and can also be used to characterize the weak convergence of probability measures in the Wasserstein space $P_p({\Omega})$ (see Definition 6.4 in Villani \cite{villani2}), roughly speaking, under some moment condition, that $\mu_k$ converges weakly to $\mu$ is equivalent to $W_p(\mu_k,\mu)\rightarrow 0$. In particular, $W_1$ is commonly called Kantorovich-Rubinstein distance and the following duality formula is well-known:
\begin{equation}\label{dua}
W_1(\mu,\nu)=\sup_{||\vp||_{\rm{Lip}\leq 1}}\{E_{\mu}[\vp]-E_{\nu}[\vp]\},
\end{equation}
where $\Phi:=\{\varphi: ||\vp||_{\rm{Lip}}\leq 1\}$ is the collection of all Lipschitz functions on $(\Omega, d)$ whose Lipschitz constants are at most 1 (see Villani \cite{villani1,villani2}).\\

Recently, Peng  establishes a theory of time-consistent sublinear expectations in \cite{Peng2008b, pengbook}, i.e.,  the $G$-expectation theory. 
This new theory provides with several important models and tools for studying robust hedging and utility maximization problems in the financial markets under Knightian uncertainty (see eg. Epstein and Ji \cite{EJ} and Vorbrink \cite{Vor}). According to Denis et al. \cite{DHP}, $G$-expectation can be expressed as an upper expectation introduced in Huber \cite{Huber},
i.e., $
\be_G[\cdot]:=\sup_{\mu\in\cp}E_\mu[\cdot]
$,
where $\mathcal{P}$ is a weakly compact set of martingale measures. Compared with the sublinear functionals studied by Lebedev in \cite{Le}, $G$-expectation is non-dominated due to the mutual singularity of elements in $\mathcal{P}$.\\


In this paper, the classical notion of distance between two probability measures is extended, more precisely, we define a generalized Wasserstein metric for two weakly compact sets of probability measures, which could be regarded as generators of (not necessarily dominated) sublinear expectations. This metric is given by a Hausdorff type distance function, whose value is the ``greatest'' of all Wasserstein distances from a probability measure in one set to the ``closest'' probability measure in the other set. Furthermore, we adopt the notion of weak convergence for sublinear expectations from Peng \cite{Peng2008b,pengbook,p3} and then characterize this type of convergence by the generalized Wasserstein metric. We notice that the main techniques in this paper could be applied to considering transport inequality in the sublinear expectation context, for example, the transport-entropy inequality and the related large deviation principle. Also, the results may provide with some useful tools for the further study of robust optimal transport problems on sublinear expectation spaces.\\

This paper is organized as follows: in the next section, we recall some notations in the framework of sublinear expectations and define the generalized Wasserstein distance. In Section 3, the duality formula for generalized Kantorovich-Rubinstein distance are given and characterization of the weak convergence of sublinear expectations is discussed.

\section{Preliminaries}
\noindent We first recall preliminaries in the framework of sublinear expectations.
Let $(\Omega,d)$ be a Polish space equipped with the Borel $\sigma$-algebra $\mathcal{B}(\Omega)$. In the sequel, we only consider the Borel probability measures on this space and denote by $\mathcal{P}(\Omega)$ the collection of all sets of these measures. For each $\mathcal{P}\in \mathcal{P}(\Omega)$, one can define a sublinear expectation $\be^\cp[\cdot]$ in the following form:
\begin{equation}\label{sub}
\be^\cp[\vp]:=\sup_{\mu\in\cp}E_\mu[\vp],\ \forall \vp\in \mathcal{L}^0(\Omega),
\end{equation}
where $\mathcal{L}^0(\Omega)$ is the space of all $\mathcal{B}(\Omega)$-measurable real functions.\\

\indent It is easy to check that $\mathbb{E}^\mathcal{P}[\cdot]$ satisfies the following properties: for $X$, $Y\in \mathcal{L}^0(\Omega)$,\\[6pt]
\noindent{\textup{(1)} {\bf Monotonicity:}\ \ } If $X\geq Y$, then $\be^\cp[X]\geq \be^\cp[Y];$\newline
{\textup{(2)} {\bf Constant preserving:} \ \ $\be^\cp[c]=c$, $\forall c\in\mathbb{R}$;
\newline
{\textup{(3)} {\bf Sub-additivity: \ \ } $\be^\cp[X+Y]\leq
\be^\cp[X]+\mathbb{{E}}^\mathcal{P}[Y];$
\newline{\textup{(4)} {\bf Positive
homogeneity:} } \ $\be^\cp[\lambda X]=\lambda \be^\cp[X]$,$\  \  \forall \lambda\geq 0$,\\[6pt]
\noindent where the usual rule $0\cdot\infty=0$ is applied in (4). \\[6pt]
\indent In Peng \cite{Peng2008b,pengbook}, the properties of sublinear expectations are systematically studied and some limit theorems are proved. In particular, Peng \cite{p3} introduces the following notion of weak convergence for sublinear expectations:
\begin{definition}\label{ch4d10}
We say that a sequence $\{\be^{\cp_n}[\cdot]\}_{n=1}^\infty$ of sublinear expectations converges weakly to some $\be^\cp[\cdot]$, if for each bounded and continuous function $\varphi\in C_{b}(\Omega)$,
$$\lim_{\nn}\be^{\cp_n}[\vp]=\be^{\cp}[\vp].$$
\end{definition}
\indent Now we define the generalized Wasserstein metric as follows:
\begin{definition}\label{d2}
Let $\cp_1$, $\cp_2\in \mathcal{P}(\Omega)$. For $p\geq 1$, we define the $p$-order Wasserstein metric between $\cp_1$ and $\cp_2$ in the following form:
$$\mathcal{W}_p(\cp_1,\cp_2)=\max\bigg(\sup_{\mu\in\cp_1}\inf_{\nu\in\cp_2}W_p(\mu,\nu),\sup_{\nu\in\cp_2}\inf_{\mu\in\cp_1}W_p(\mu,\nu)\bigg),$$
where ${W}_p(\mu,\nu)$ is the classical $p$-order Wasserstein distance between two probability measures $\mu$ and $\nu$.
\end{definition}
\indent It is evident that the function $\cw_p$ is ordered in $p$ as its classical counterpart $W_p$, namely, $\cw_p\leq \cw_q$, if $1\leq p\leq q$.
\begin{remark}
It is easy to see that $\mathcal{W}_p$ is a semi-metric and it will be a metric if we only consider weakly compact elements in $\cp(\Omega)$.


\end{remark}
Similarly to Definition 6.4 in Villani \cite{villani2}, we can define a space $\mathcal{P}_p(\Omega)$, on which $\mathcal{W}_p$ defines a finite distance.
\begin{definition} (Wasserstein space)\label{def24}
For $p\geq 1$, we denote by $\mathcal{P}_p(\Omega)$ a subset of $\mathcal{P}(\Omega)$, which is the collection of all $\mathcal{P}$ such that

(a) $\cp$ is weakly compact and convex;

(b) For an arbitrary point $\omega_0\in\Omega$,
$$
\lim_{K\rightarrow\infty}\be^\cp[d(\omega_0,\cdot)^p\mathbf{1}_{\{\omega\in\Omega:\ d(\omega_0,\omega)\geq K\}}(\cdot)]=0.
$$
\end{definition}

\begin{remark}
\ \

\noindent(i) It is easy to verify that the space $\cp_p(\Omega)$ does not depend on the choice of $\omega_0$.

\noindent(ii) The assumptions for the space $\cp_p(\Omega)$ are crucial for our main result. In particular, both assumption (a) and (b) enable us to apply Sion's result \cite{S} and obtain a minimax theorem (see Theorem \ref{minimax}). Besides, the assumption of weak compactness (a) on the set $\mathcal{P}$ ensures the ``lower continuity'' of the sublinear expectation $\mathbb{E}^\mathcal{P}[\cdot]$ for the indicator functions of closed sets or continuous functions, i.e., $\be^{\cp}[\vp_n]\downarrow\be^{\cp}[\vp]$, if $\vp_n={\bf 1}_{F_n}$ or $\vp_n\in C_b(\Omega)$ and $\vp_n\downarrow\vp$ (see Lemma 7 and 8, Theorem 31 in Denis et al. \cite{DHP}).

\noindent(iii) We remark that a typical sublinear expectation, the $G$-expectation defined in Peng \cite{Peng2008b,pengbook}, is associated with such a set of probability measures, according to \cite{DHP}.

\noindent(iv) The sublinear expectation admits an unique representation on $\cp_p(\Omega)$, i.e., let $\cp_1,\cp_2\in\cp_p(\Omega)$, if $\be^{\cp_1}[\vp]=\be^{\cp_2}[\vp]$, $\forall \vp\in C_b(\Omega)$, then $\cp_1=\cp_2$. The proof can be easily derived by Corollary \ref{cor1}.
\end{remark}
The main technical tool in this paper is the minimax theorem in \cite{S} as follows:
\begin{theorem}[Corollary 3.3 in \cite{S}] Let $M$ and $N$ be convex spaces, one of which is compact, and $f$ a function on $M\times N$, quasi-concave-convex and upper semi-continuous-lower semi-continuous. Then $\sup\inf f=\inf \sup f$.
\end{theorem}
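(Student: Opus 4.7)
The inequality $\sup_M\inf_N f \leq \inf_N\sup_M f$ is automatic, so the plan is to prove the reverse. I would assume without loss of generality that $N$ is the compact side, set $\alpha := \sup_M\inf_N f$ and $\beta := \inf_N \sup_M f$, and argue by contradiction: suppose $\alpha < \beta$ and fix $c \in (\alpha,\beta)$. For each $x \in M$, introduce
$$N_x := \{y \in N : f(x,y) \leq c\},$$
which is closed (by lower semi-continuity of $f(x,\cdot)$), convex (by quasi-convexity of $f(x,\cdot)$), compact (as a closed subset of the compact $N$), and nonempty (since $\inf_y f(x,y) \leq \alpha < c$). If some point $y_0$ lay in $\bigcap_{x \in M} N_x$, we would have $\sup_x f(x,y_0) \leq c < \beta$, contradicting the definition of $\beta$. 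Hence the total intersection is empty, and compactness of $N$ produces finitely many $x_1,\dots,x_n \in M$ for which already $\bigcap_{i=1}^n N_{x_i} = \emptyset$.

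Such a finite configuration must be impossible; this is the substance of Sion's theorem and is proved by induction on $n$. The inductive step collapses two points of the configuration into a single point on their joining segment while slightly raising the level (staying below $\beta$), so that after $n-1$ collapses one is left with a single $x_\ast \in M$ whose sublevel set at some level $c'' \in (c,\beta)$ is empty. But then $\inf_y f(x_\ast, y) \geq c'' > \alpha$, contradicting the definition of $\alpha$ and closing the argument.

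The main obstacle is this two-point consolidation. Along the segment $x_t := (1-t)x_1 + t x_2$ in $M$, quasi-concavity of $f(\cdot,y)$ yields $N_{x_t} \subseteq N_{x_1} \cup N_{x_2}$, so the convex (hence connected) set $N_{x_t}$ splits into the two disjoint closed pieces $N_{x_t} \cap N_{x_1}$ and $N_{x_t} \cap N_{x_2}$, one of which must be empty; this defines a decomposition $[0,1] = T_1 \cup T_2$ with $0 \in T_1$, $1 \in T_2$ and $T_1 \cap T_2 = \emptyset$ (the last because $N_{x_t}$ is itself nonempty for every $t$). Upper semi-continuity of $f(\cdot,y)$ is then used, at the cost of a small enlargement of the level from $c$ to $c'$, to verify that $T_1$ and $T_2$ are closed in $[0,1]$; connectedness of $[0,1]$ then forces a transition point $t_\ast$, at which the collapse of the sublevel set produces the required $x_\ast$. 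Making this semi-continuity/connectedness bookkeeping rigorous is the only real subtlety; once in place, everything else is routine compactness and finite induction.
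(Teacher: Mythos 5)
You should note at the outset that the paper does not prove this statement at all: it is imported verbatim as Corollary 3.3 of Sion's 1958 article, so there is no internal argument to compare against. What you have written is a sketch of the classical proof (Sion's original connectedness argument, in the streamlined form later given by Komiya), and most of it is correctly assembled: the reduction to a finite configuration via closedness of the sublevel sets $N_x$ and compactness of $N$, the inclusion $N_{x_t}\subseteq N_{x_1}\cup N_{x_2}$ from quasi-concavity, the dichotomy forced by convexity/connectedness of $N_{x_t}$ when $N_{x_1}\cap N_{x_2}=\emptyset$, and the level bump from $c$ to $c'$ needed to turn upper semi-continuity in the first variable into closedness of $T_1$ and $T_2$. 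One cosmetic remark: the standard conclusion of the two-point step is a direct contradiction --- $[0,1]$ cannot be written as a union of two disjoint nonempty closed sets --- rather than the existence of a ``transition point $t_*$''; the point $x_*$ with $\inf_y f(x_*,y)\geq c$ arises because the only soft hypothesis feeding the forbidden decomposition is the nonemptiness of every $N_{x_t}$.

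The genuine gap is in the induction for $n\geq 3$. As described --- ``collapse two points of the configuration into a single point on their joining segment while slightly raising the level'' --- the step does not go through, because the two-point lemma requires $N_{x_1}\cap N_{x_2}=\emptyset$, whereas an $n$-point configuration with $\bigcap_{i=1}^n N_{x_i}=\emptyset$ need not contain any pairwise disjoint pair of sublevel sets; after a naive collapse you have no control on how the new point interacts with the remaining $n-2$ sets. The standard repair (Komiya's second lemma) is to relativize rather than collapse: pick a slightly raised level $c'\in(c,\beta)$, set $N':=\{y\in N: f(x_n,y)\leq c'\}$, observe that $N'$ is again compact and convex and that the sublevel sets of $x_1,\dots,x_{n-1}$ at level $c'$ have empty intersection inside $N'$, apply the induction hypothesis to $f$ restricted to $M\times N'$ to produce a single $x'$ with $\inf_{y\in N'}f(x',y)>c'$, and only then invoke the two-point lemma on the pair $(x',x_n)$, whose level-$c'$ sublevel sets are genuinely disjoint in all of $N$. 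With that modification your outline closes into a complete proof; without it, the inductive step is not justified.
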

In the rest of this paper, the following special case of the above theorem will be quoted, where $f$ is a linear and continuous function.
\begin{theorem}{\label{minimax}}
Let $\cp\in \mathcal{P}_1(\Omega)$. Fixing a singleton $\{\mu^*\}\in \mathcal{P}_1(\Omega)$, we have
$$\inf_{\mu\in\cp}\sup_{||\vp||_{\rm{Lip}\leq 1}}\{E_{\mu^*}[\vp]-E_\mu[\vp]\}=\sup_{||\vp||_{\rm{Lip}\leq 1}}\inf_{\mu\in\cp}\{E_{\mu^*}[\vp]-E_\mu[\vp]\}.$$
\end{theorem}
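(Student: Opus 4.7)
The strategy is to apply Sion's minimax theorem (stated immediately above) to the functional
$$f(\vp,\mu):=E_{\mu^*}[\vp]-E_\mu[\vp]$$
on $M_0\times \cp$, where $M_0:=\{\vp:\|\vp\|_{\rm{Lip}}\leq 1,\ \vp(\omega_0)=0\}$. Restricting from $\{\vp:\|\vp\|_{\rm{Lip}}\leq 1\}$ down to $M_0$ does not change either side of the claimed identity, because $f$ is invariant under $\vp\mapsto\vp+c$; the advantage is that for $\vp\in M_0$ the Lipschitz condition yields the pointwise bound $|\vp(\omega)|\leq d(\omega_0,\omega)$. Combining this bound with assumption (b) of Definition \ref{def24} (with $p=1$), applied both to $\{\mu^*\}$ and to $\cp$, $f$ is finite on $M_0\times\cp$ and $\sup_{\mu\in\cp}E_\mu[|\vp|]<\infty$ for every $\vp\in M_0$.

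Next I would verify the hypotheses of Sion's theorem. $M_0$ is convex as the intersection of the subspace $\{\vp(\omega_0)=0\}$ with a Lipschitz-norm ball, and $\cp$ is convex and weakly compact by assumption (a), so the compactness requirement on one of the two spaces is met. Linearity of $f$ in each argument separately provides quasi-concavity in $\vp$ and quasi-convexity in $\mu$. Equipping $M_0$ with the initial topology generated by the linear functionals $\{E_\nu:\nu\in\cp\cup\{\mu^*\}\}$ makes $f(\cdot,\mu)$ continuous, and in particular upper semicontinuous, for every $\mu$.

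The main obstacle is verifying lower semicontinuity of $f(\vp,\cdot)$ on $\cp$ with its weak topology, for each fixed $\vp\in M_0$. Since $\vp$ may be unbounded, this does not follow directly from the Portmanteau theorem. Instead, I would exploit the uniform tail estimate
$$\be^{\cp}\bigl[d(\omega_0,\cdot)\mathbf{1}_{\{d(\omega_0,\cdot)\geq K\}}\bigr]\longrightarrow 0\quad\text{as }K\to\infty$$
supplied by assumption (b), together with the pointwise control $|\vp|\leq d(\omega_0,\cdot)$. Since $\Omega$ is Polish the weak topology on $\cp(\Omega)$ is metrizable and weak compactness is sequential, so it suffices to argue sequentially: for any weakly convergent sequence $\mu_n\to\mu$ in $\cp$, I would apply Portmanteau to the truncation $\vp_K:=(\vp\wedge K)\vee(-K)\in C_b(\Omega)$ and control the remainder $E_{\mu_n}[\vp-\vp_K]$ uniformly in $n$ by the tail estimate, concluding $E_{\mu_n}[\vp]\to E_\mu[\vp]$. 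Once this continuity is established, Sion's theorem immediately yields $\sup_{\vp\in M_0}\inf_{\mu\in\cp}f(\vp,\mu)=\inf_{\mu\in\cp}\sup_{\vp\in M_0}f(\vp,\mu)$, which is the claimed identity after unrestricting $\vp(\omega_0)$.
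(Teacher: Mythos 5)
Your proposal is correct and follows the route the paper intends: the paper states this theorem without proof, simply quoting it as a special case of Sion's minimax theorem, and your argument supplies exactly the verification of Sion's hypotheses that the paper leaves implicit (cf.\ Remark 2.5(ii), which notes that assumptions (a) and (b) of Definition \ref{def24} are what make Sion's result applicable). In particular, your normalization $\vp(\omega_0)=0$ giving $|\vp|\le d(\omega_0,\cdot)$, and your truncation-plus-tail-estimate argument for the continuity of $\mu\mapsto E_\mu[\vp]$ on the weakly compact set $\cp$ for unbounded Lipschitz $\vp$, are precisely the points that need checking and are handled correctly.
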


\begin{corollary}{\label{cor1}}
Let $\cp\in\cp_1(\Omega)$. If there exists a singleton $\{\mu^*\}\in\cp_1(\Omega)$ such that
\begin{align}\label{e22}
E_{\mu^*}[\vp]\leq\be^\cp[\vp],\ \ \ \forall \vp\in C_b(\Omega),
\end{align}
then $\mu^*\in\cp$.
\end{corollary}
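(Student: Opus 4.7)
The plan is to extend the hypothesis \eqref{e22} from $C_b(\Omega)$ to all $1$-Lipschitz test functions, then apply the minimax theorem together with the Kantorovich-Rubinstein duality \eqref{dua} to produce a sequence in $\cp$ converging to $\mu^*$ in $W_1$, and finally conclude via the weak closedness of $\cp$.

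First, I would upgrade \eqref{e22} to: for every $\vp$ with $\|\vp\|_{\mathrm{Lip}} \leq 1$,
$$E_{\mu^*}[\vp] \leq \be^\cp[\vp].$$
Such a $\vp$ satisfies $|\vp(\omega)| \leq |\vp(\omega_0)| + d(\omega_0,\omega)$, and condition (b) of Definition \ref{def24} (with $p=1$), together with sub-additivity of $\be^\cp$, ensures that both $\be^\cp[d(\omega_0,\cdot)] < \infty$ and $E_{\mu^*}[d(\omega_0,\cdot)] < \infty$. Truncate by $\vp_K := (-K) \vee \vp \wedge K \in C_b(\Omega)$; then
$$|\vp - \vp_K| \leq \bigl(|\vp(\omega_0)| + d(\omega_0,\cdot)\bigr) \mathbf{1}_{\{d(\omega_0,\cdot) > K - |\vp(\omega_0)|\}},$$
whose $\be^\cp$- and $E_{\mu^*}$-values both tend to $0$ as $K \to \infty$ by condition (b). Passing to the limit in $E_{\mu^*}[\vp_K] \leq \be^\cp[\vp_K]$ yields the extended inequality.

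Next, rewriting this as $\sup_{\|\vp\|_{\mathrm{Lip}} \leq 1} \inf_{\mu \in \cp} \{E_{\mu^*}[\vp] - E_\mu[\vp]\} \leq 0$, I would apply Theorem \ref{minimax} to interchange sup and inf and then invoke the Kantorovich-Rubinstein duality \eqref{dua}:
$$\inf_{\mu \in \cp} W_1(\mu^*, \mu) \;=\; \inf_{\mu \in \cp} \sup_{\|\vp\|_{\mathrm{Lip}} \leq 1} \{E_{\mu^*}[\vp] - E_\mu[\vp]\} \;\leq\; 0.$$
Hence there exists a sequence $\{\mu_n\} \subset \cp$ with $W_1(\mu^*, \mu_n) \to 0$, so $\mu_n$ converges weakly to $\mu^*$; since $\cp$ is weakly compact and therefore weakly closed, $\mu^* \in \cp$.

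The main obstacle is the extension step: the test functions in the Kantorovich-Rubinstein formula are Lipschitz but not bounded, so \eqref{e22} as stated does not directly reach them. This is precisely where the uniform tail condition (b) defining $\cp_1(\Omega)$ is indispensable, since it forces a uniform first-moment integrability over $\cp$ that controls the truncation error. Once Step 1 is in hand, Theorem \ref{minimax} applies in the exact form stated and the remaining arguments are standard.
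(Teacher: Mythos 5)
Your proposal is correct and follows essentially the same route as the paper: extend \eqref{e22} to $1$-Lipschitz test functions via condition (b), apply Theorem \ref{minimax}, and identify the inner supremum as $W_1(\mu^*,\cdot)$ through the Kantorovich--Rubinstein duality. The only (harmless) deviation is at the end, where you take a minimizing sequence and use weak closedness of $\cp$ instead of selecting an attaining $\bar\mu$ by weak compactness, and you spell out the truncation argument that the paper compresses into a single sentence.
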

\begin{proof}
By (b) in Definition \ref{def24}, we can use $\Phi:=\{\vp:||\vp||_{\rm{Lip}\leq 1}\}$ instead of $C_b(\Omega)$ in (\ref{e22}). Then we can rewrite (\ref{e22}) as
$$\sup_{\vp\in\Phi}\inf_{\mu\in\cp}\{E_{\mu^*}[\vp]-E_\mu[\vp]\}=0.$$
By Theorem \ref{minimax}, we have
$$\inf_{\mu\in\cp}\sup_{\vp\in\Phi}\{E_{\mu^*}[\vp]-E_\mu[\vp]\}=0.$$
Since $\cp$ is weakly compact, we can choose $\bar{\mu}\in\mathcal{P}$ such that $\sup_{\vp\in\Phi}\{E_{\mu^*}[\vp]-E_{\bar{\mu}}[\vp]\}=0$, which implies that $\mu^*=\bar{\mu}\in\cp$.

\end{proof}
\section{Main results}
\noindent In this section, we first study the duality formula for the generalized Kantorovich-Rubinstein distance $\mathcal{W}_1$ on $\mathcal{P}_1(\Omega)$ defined in the previous section.
\begin{theorem}[Duality formula for the generalized Kantorovich-Rubinstein distance]\label{ch2t1} Let $\cp_1$, $\cp_2\in \mathcal{P}_1(\Omega)$. Then, we have
$$\mathcal{W}_1(\cp_1,\cp_2)=\sup_{||\vp||_{\rm{Lip}\leq 1}}\{|\be^{\cp_1}[\vp]-\be^{\cp_2}[\vp]|\}.$$
\end{theorem}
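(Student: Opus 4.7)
The plan is to reduce each of the two one-sided Hausdorff quantities in the definition of $\cw_1$ to a supremum over Lipschitz test functions by combining the classical Kantorovich--Rubinstein duality (\ref{dua}) with the minimax theorem (Theorem \ref{minimax}), and then to take the maximum of the two symmetric expressions.

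First I would fix an arbitrary $\mu\in\cp_1$ (a singleton, so in particular a member of $\cp_1(\Omega)$) and write, by (\ref{dua}),
$$\inf_{\nu\in\cp_2}W_1(\mu,\nu)=\inf_{\nu\in\cp_2}\sup_{\vp\in\Phi}\{E_\mu[\vp]-E_\nu[\vp]\}.$$
The integrand $(\nu,\vp)\mapsto E_\mu[\vp]-E_\nu[\vp]$ is linear in each argument, $\cp_2$ is weakly compact and convex by Definition \ref{def24}(a), and $\Phi=\{\vp:\|\vp\|_{\mathrm{Lip}}\leq 1\}$ is convex. Invoking Theorem \ref{minimax} (with $\mu^*=\mu$ and $\cp=\cp_2$) allows me to swap the inf and sup:
$$\inf_{\nu\in\cp_2}W_1(\mu,\nu)=\sup_{\vp\in\Phi}\inf_{\nu\in\cp_2}\{E_\mu[\vp]-E_\nu[\vp]\}=\sup_{\vp\in\Phi}\{E_\mu[\vp]-\be^{\cp_2}[\vp]\}.$$
Taking the supremum over $\mu\in\cp_1$ and interchanging the two suprema gives
$$\sup_{\mu\in\cp_1}\inf_{\nu\in\cp_2}W_1(\mu,\nu)=\sup_{\vp\in\Phi}\{\be^{\cp_1}[\vp]-\be^{\cp_2}[\vp]\}.$$
The analogous argument with the roles of $\cp_1$ and $\cp_2$ exchanged yields
$$\sup_{\nu\in\cp_2}\inf_{\mu\in\cp_1}W_1(\mu,\nu)=\sup_{\vp\in\Phi}\{\be^{\cp_2}[\vp]-\be^{\cp_1}[\vp]\}.$$
Taking the maximum of the two and noting that $\Phi$ is closed under $\vp\mapsto -\vp$ combines the two one-sided suprema into the single supremum of $|\be^{\cp_1}[\vp]-\be^{\cp_2}[\vp]|$ over $\vp\in\Phi$, which is exactly the asserted identity.

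The main technical point to check carefully is the legitimacy of the minimax step: one needs the hypotheses of Sion's theorem (convexity and the appropriate semicontinuity/compactness) to hold for the pair $(\cp_2,\Phi)$ with the function $\nu\mapsto E_\nu[\vp]$. Convexity of $\Phi$ is immediate, $\cp_2$ is convex and weakly compact by Definition \ref{def24}(a), and linearity in each variable takes care of the quasi-concave-convex requirement; the remaining continuity check uses weak compactness together with the moment condition Definition \ref{def24}(b), exactly as in the statement of Theorem \ref{minimax}. A secondary subtlety is that the duality (\ref{dua}) for $W_1$ in a Polish space requires $\mu,\nu$ to have finite first moment, but this is precisely what membership in $\cp_1(\Omega)$ guarantees through (b) of Definition \ref{def24}. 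Once these are in place, the rest is the symmetric bookkeeping sketched above.
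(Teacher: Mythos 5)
Your proposal is correct and follows essentially the same route as the paper's own proof: classical Kantorovich--Rubinstein duality for each pair $(\mu,\nu)$, Sion's minimax theorem (Theorem \ref{minimax}) to swap the infimum over the weakly compact convex set with the supremum over $\Phi$, interchange of the two suprema, and finally the observation that the maximum of the two one-sided expressions is the supremum of the absolute difference. No gaps; the technical points you flag (applicability of the minimax theorem and finiteness of first moments via Definition \ref{def24}) are exactly the ones the paper relies on.
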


\begin{proof}
By the classical duality formula for the Kantorovich-Rubinstein distance and Theorem \ref{minimax}, we have
\begin{align*}
\sup_{\mu\in\cp_1}\inf_{\nu\in\cp_2}W_1(\mu,\nu)&=\sup_{\mu\in\cp_1}\inf_{\nu\in\cp_2}\sup_{||\vp||_{\rm{Lip}\leq 1}}\{E_{\mu}[\vp]-E_{\nu}[\vp]\}\\
&=\sup_{\mu\in\cp_1}\sup_{||\vp||_{\rm{Lip}\leq 1}}\inf_{\nu\in\cp_2}\{E_{\mu}[\vp]-E_{\nu}[\vp]\}\\
&=\sup_{||\vp||_{\rm{Lip}\leq 1}}\sup_{\mu\in\cp_1}\inf_{\nu\in\cp_2}\{E_{\mu}[\vp]-E_{\nu}[\vp]\}\\
&=\sup_{||\vp||_{\rm{Lip}\leq 1}}\{\be^{\cp_1}[\vp]-\be^{\cp_2}[\vp]\}.
\end{align*}
\noindent Similarly, we can obtain
$$\sup_{\nu\in\cp_2}\inf_{\mu\in\cp_1}W_1(\mu,\nu)=\sup_{||\vp||_{\rm{Lip}\leq 1}}\{\be^{\cp_2}[\vp]-\be^{\cp_1}[\vp]\}.$$
\noindent Therefore,
\begin{align*}
\mathcal{W}_1(\cp_1,\cp_2)&=\max\{\sup_{||\vp||_{\rm{Lip}\leq 1}}\{\be^{\cp_1}[\vp]-\be^{\cp_2}[\vp]\}, \sup_{||\vp||_{\rm{Lip}\leq 1}}\{\be^{\cp_2}[\vp]-\be^{\cp_1}[\vp]\}\}\\
&=\sup_{||\vp||_{\rm{Lip}\leq 1}}\{|\be^{\cp_1}[\vp]-\be^{\cp_2}[\vp]|\}.\end{align*}
\end{proof}

It is well-known that Wasserstein distances can metrize weak convergence in the classical probability theory. Correspondingly, we can prove a similar theorem for sublinear expectations. Before presenting this result, we shall discuss some properties of the weak convergence of sublinear expectations.\\

Given a collection of probability measures $\mathcal{P}$, define
$$\overline{\cp}(A):=\sup_{P\in\cp}P(A);\ \underline{\cp}(A):=\inf_{P\in\cp}P(A).$$
Then, we have the following proposition:

\begin{proposition}\label{p-2}
Let $p\geq 1$. Suppose that $\{\cp_n\}_{n=1}^\infty$ and $\cp$ are weakly compact sets of probability measures on $(\Omega, d)$. If $\{\be^{\cp_n}[\cdot]\}_{n=1}^\infty$ converges weakly to $\be^{\cp}[\cdot]$,   then we have\\
\noindent(i) For each closed set $F$, $$\limsup_{\nn}\overline{\cp_n}(F)\leq\overline{\cp}(F).$$
Or equivalently, for each open set $G$,
$$\liminf_{\nn}\underline{\cp_n}(G)\geq \underline{\cp}(G).$$
\noindent(ii) The set $\cp^*=\bigcup_{n=1}^\infty\cp_n$ is tight.
\end{proposition}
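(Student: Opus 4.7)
The plan is to establish (i) as a Portmanteau-type statement for weakly compact sets of measures, and then use (i) together with Prokhorov's theorem to derive (ii) via a nested-approximation argument.

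For (i), I would approximate $\mathbf{1}_F$ from above by the Lipschitz bounded continuous functions $\varphi_k(\omega) := (1 - k\,\dist(\omega,F))^+$, which decrease pointwise to $\mathbf{1}_F$ because $F$ is closed. Monotonicity of $\be^{\cp_n}$ yields $\overline{\cp_n}(F) \leq \be^{\cp_n}[\varphi_k]$, and weak convergence in the sense of Definition \ref{ch4d10} applied to $\varphi_k \in C_b(\Omega)$ gives
$$\limsup_{n\to\infty}\overline{\cp_n}(F) \leq \lim_{n\to\infty}\be^{\cp_n}[\varphi_k] = \be^{\cp}[\varphi_k].$$
Letting $k \to \infty$, the lower continuity of $\be^{\cp}$ along monotone sequences of bounded continuous functions (recalled in the remark following Definition \ref{def24}) forces $\be^{\cp}[\varphi_k] \downarrow \be^{\cp}[\mathbf{1}_F] = \overline{\cp}(F)$. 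The equivalent open-set statement follows by taking complements through $\overline{\cp_n}(F) = 1 - \underline{\cp_n}(F^c)$ together with $\limsup(1 - a_n) = 1 - \liminf a_n$.

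For (ii), I would fix $\varepsilon > 0$, choose a countable dense set $\{x_i\} \subset \Omega$, and set $G_N^m := \bigcup_{i\leq N} B(x_i, 1/m)$ for $m, N \geq 1$. The lower-continuity property applied to the closed decreasing family $(G_N^m)^c \downarrow \emptyset$ as $N \to \infty$ furnishes $N_m^*$ with $\underline{\cp}(G_{N_m^*}^m) > 1 - \varepsilon/2^{m+1}$. The open-set form of (i) then yields some $n_m$ such that
$$\underline{\cp_n}(G_{N_m^*}^m) > 1 - \varepsilon/2^m \quad \text{for all } n \geq n_m.$$
The remaining finitely many indices $n < n_m$ are handled individually: each weakly compact $\cp_n$ is tight by Prokhorov's theorem, so enlarging $N_m^*$ to some $N_m$ achieves $\underline{\cp_n}(G_{N_m}^m) > 1 - \varepsilon/2^m$ uniformly in $n$. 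Setting $K := \bigcap_{m\geq 1}\overline{G_{N_m}^m}$, completeness of $\Omega$ and total boundedness at every scale $1/m$ make $K$ compact, and a union bound gives $\overline{\cp_n}(K^c) \leq \sum_{m\geq 1}\varepsilon/2^m = \varepsilon$ uniformly in $n$, proving tightness of $\cp^*$.

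The main obstacle I anticipate is the bookkeeping in (ii): part (i) provides only an asymptotic inequality, so one must treat the initial finitely many $\cp_n$ separately via Prokhorov while preserving the geometric error budget across all scales $m$. The other delicate point is invoking the lower-continuity property from Denis et al.\ \cite{DHP} in the correct form---either for closed-set indicators or for monotone sequences of bounded continuous functions---which is precisely where the weak-compactness of $\cp$ and of each $\cp_n$ is genuinely used.
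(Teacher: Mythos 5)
Your proposal is correct and follows essentially the same route as the paper: part (i) via monotone approximation of $\mathbf{1}_F$ from above by bounded continuous functions combined with the lower continuity of $\be^{\cp}$ from Denis et al., and part (ii) via the dense cover, the $\varepsilon/2^m$ budget, and the compact set $K=\bigcap_m \overline{G_{N_m}^m}$. The only difference is in logical packaging: the paper establishes the uniform-in-$n$ bound $\underline{\cp_n}(G^m_{N_m})>1-\varepsilon/2^m$ by contradiction (extracting a subsequence $n_k\to\infty$ and contradicting the weak compactness of $\cp$), whereas you obtain it directly by first securing the bound for $\cp$, transferring it to all sufficiently large $n$ via (i), and absorbing the finitely many remaining indices using the weak compactness of each individual $\cp_n$.
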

\begin{proof}
\noindent(i) For a closed set $F$, there exists a sequence of bounded continuous function $\{\psi_k\}_{k=1}^\infty$ such that $\psi_k\downarrow\mathbf{1}_F$, thus from Theorem 31 in Denis et al. \cite{DHP}, $\be^\cp[\psi_k]\downarrow\overline{\cp}(F)$. On the other hand,
$$\limsup_{\nn}\overline{\cp_n}(F)\leq\limsup_{\nn}\be^{\cp_n}[\psi_k]=\be^{\cp}[\psi_k], \ \ \forall k\in\mathbb{N},$$
which implies that (i) holds.\\

\noindent(ii) Since $\Omega$ is Polish, we can choose a subset $\{\omega_1,\cdots,\omega_n,\cdots\}$ dense in $\Omega$ and for a fixed $m\in \mathbb{N}$, we consider the cover $\{B(\omega_i,\frac{1}{m}):=\{\omega:d(\omega_i,\omega)<\frac{1}{m}\}\}^\infty_{i=1}$.\\

 Fixing $\varepsilon>0$, we first prove that for each $m$, there exists $k_m$ such that
\begin{equation}\underline{\cp_n}\left(\bigcup_{i=1}^{k_m}B(\omega_i,\frac{1}{m})\right)>1-\ve/2^m, \ \ \forall n\in\mathbb{N}.\label{equ1}\end{equation}
Otherwise, there exists $m_0$, such that for each $k$, there exists $n_k$,
$$\underline{\cp_{n_k}}\left(\bigcup_{i=1}^{k}B(\omega_i,\frac{1}{m_0})\right)\leq 1-\ve/2^{m_0}.$$
For each $n\in \mathbb{N}$, from the weak compactness of $\mathcal{P}_n$ and Lemma 8 in \cite{DHP}, we know
 $$\underline{\cp_n}\left(\bigcup_{i=1}^k B(\omega_i,\frac{1}{m_0})\right)=1-\overline{\cp_n}\left(\big(\bigcup_{i=1}^k B(\omega_i,\frac{1}{m_0})\big)^c\right)\uparrow 1,\ \ {\rm as}\ k\rightarrow \infty.$$
Thus, we can prove by contradiction that $\lim_{k \rightarrow \infty} n_k=\infty$. Then, by (i), $\forall j\in\mathbb{N}$,
\begin{align*}
\underline{\cp}\left(\bigcup_{i=1}^{j}B(\omega_i,\frac{1}{m_0})\right)&\leq\liminf_{k\rightarrow\infty}\underline{\cp_{n_{k}}}\left(\bigcup_{i=1}^{j}B(\omega_i,\frac{1}{m_0})\right)\\
&\leq \liminf_{k\rightarrow\infty}\underline{\cp_{n_{k}}}\left(\bigcup_{i=1}^{k}B(\omega_i,\frac{1}{m_0})\right)\leq 1-\ve/2^{m_0},
\end{align*}
which implies that
$$
\overline{\cp}\left(\big(\bigcup_{i=1}^{j} B(\omega_i,\frac{1}{m_0})\big)^c\right)\geq \ve/2^{m_0},\ \ \forall j\in\mathbb{N}.
$$
This is a contradiction to the weak compactness of ${\cp}$.\\

Choosing $k_m$ as in (\ref{equ1}), we can verify that $K=\bigcap_{m=1}^\infty\bigcup_{i=1}^{k_m}\overline{B(\omega_i,\frac{1}{m})}$ is compact.
Then, for each $n$, we have
\begin{align*}
\overline{\cp_n}(K^c)&=\overline{\cp_n}\left(\bigcup_{m=1}^\infty\big(\bigcup_{i=1}^{k_m}\overline{B(\omega_i, \frac{1}{m})}\big)^c\right)\\
&\leq\sum_{m=1}^\infty\overline{\cp_n}\left(\big(\bigcup_{i=1}^{k_m}\overline{B(\omega_i,\frac{1}{m})}\big)^c\right)\\
&=\sum_{m=1}^\infty\left(1-\underline{\cp_n}\left(\bigcup_{i=1}^{k_m}\overline{B(\omega_i,\frac{1}{m})}\right)\right)\\
&<\sum_{m=1}^\infty\ve/2^m=\ve.
\end{align*}
Therefore, the set $\cp^*=\bigcup_{n=1}^\infty\cp_n$ is tight.
\end{proof}
\begin{proposition}\label{p1}
Let $p\geq 1$. Suppose that $\{\cp_n\}_{n=1}^\infty$ and $\cp$ are elements in $\cp_p(\Omega)$. If $\{\cp_n\}_{n=1}^\infty$ satisfies
\begin{equation}\lim_{K\rightarrow\infty}\limsup_{n\rightarrow\infty}\be^{\cp_n}[d(\omega_0,\cdot)^p\mathbf{1}_{\{\omega\in\Omega:\ d(\omega_0,\omega)\geq K\}}(\cdot)]=0,\label{e1}\end{equation}
then the following statements are equivalent:\\
\noindent(i) $\cw_p(\cp_n,\cp)\rightarrow 0.$\\
\noindent(ii) $\cw_1(\cp_n,\cp)\rightarrow 0.$
\end{proposition}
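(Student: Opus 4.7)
The direction (i)$\Rightarrow$(ii) is immediate from the classical pointwise inequality $W_1(\mu,\nu)\leq W_p(\mu,\nu)$ (Jensen's inequality applied to any coupling), which after taking the appropriate suprema and infima in Definition \ref{d2} gives $\cw_1(\cp_n,\cp)\leq \cw_p(\cp_n,\cp)$.

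For the substantive direction (ii)$\Rightarrow$(i) my plan is a truncation argument that reduces $W_p$ to $W_1$ plus a uniform tail error. Write $r(\cdot):=d(\omega_0,\cdot)$ and fix $A>0$. For any coupling $\pi$ of two measures $\mu,\nu$ I split
\[\int d(x,y)^p\,d\pi=\int_{\{d\leq A\}}d(x,y)^p\,d\pi+\int_{\{d>A\}}d(x,y)^p\,d\pi,\]
bounding the first term by $A^{p-1}\int d(x,y)\,d\pi$ and, via the triangle inequality together with a case analysis on whether $r(x)$ or $r(y)$ exceeds $A/2$, bounding the second by
\[2^p\Bigl(\int r(x)^p\mathbf{1}_{\{r(x)>A/2\}}\,d\mu(x)+\int r(y)^p\mathbf{1}_{\{r(y)>A/2\}}\,d\nu(y)\Bigr).\]
Since the tail terms depend only on the marginals, taking the infimum over $\pi$ yields the measure-level inequality
\[W_p(\mu,\nu)^p\leq A^{p-1}W_1(\mu,\nu)+2^p\bigl(E_\mu[r^p\mathbf{1}_{\{r>A/2\}}]+E_\nu[r^p\mathbf{1}_{\{r>A/2\}}]\bigr).\]

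I would then lift this to the set-valued setting. For $\mu\in\cp_n$ and $\nu\in\cp$ the tail terms are dominated respectively by $\be^{\cp_n}[r^p\mathbf{1}_{\{r>A/2\}}]$ and $\be^{\cp}[r^p\mathbf{1}_{\{r>A/2\}}]$; crucially these upper bounds are uniform in $\mu$ and $\nu$, so they pass unchanged through the subsequent $\inf_\nu$ and $\sup_\mu$. Using that $x\mapsto x^p$ is increasing and continuous on $[0,\infty)$, so that $(\sup_\mu\inf_\nu W_p)^p=\sup_\mu\inf_\nu W_p^p$, together with the symmetric manipulation on the other half of the max in Definition \ref{d2}, produces
\[\cw_p(\cp_n,\cp)^p\leq A^{p-1}\cw_1(\cp_n,\cp)+2^p\bigl(\be^{\cp_n}[r^p\mathbf{1}_{\{r>A/2\}}]+\be^{\cp}[r^p\mathbf{1}_{\{r>A/2\}}]\bigr).\]

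To conclude, given $\ve>0$, I would use the hypothesis (\ref{e1}) for the sequence $\{\cp_n\}$ and condition (b) of Definition \ref{def24} for $\cp$ to choose $A$ so large that both tail terms are $<\ve$ for every sufficiently large $n$; combined with $\cw_1(\cp_n,\cp)\to 0$ this forces $\limsup_n\cw_p(\cp_n,\cp)^p\leq 2^{p+1}\ve$, and letting $\ve\downarrow 0$ finishes the proof. The only genuinely technical step is the truncation identity for $d(x,y)^p\mathbf{1}_{\{d>A\}}$ in terms of purely marginal tails; once that is established, passage from a single pair $(\mu,\nu)$ to the Hausdorff-type quantity $\cw_p(\cp_n,\cp)$ proceeds smoothly precisely because the tail bound is marginal-free and uniform over $\mu\in\cp_n$, $\nu\in\cp$.
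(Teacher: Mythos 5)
Your proof is correct and rests on the same idea as the paper's: dominate $d(\cdot,\cdot)^p$ by a truncated part controlled by $W_1$ plus marginal tail terms that are uniform over $\mu\in\cp_n$, $\nu\in\cp$ and vanish by (\ref{e1}) and Definition \ref{def24}(b). The only organizational difference is that the paper routes the truncated part through the auxiliary bounded metric $\tilde d=\min(d,1)$ and the equivalence of the distances $\tilde W_p$ for bounded metrics, whereas your pointwise bound $d^p\le A^{p-1}d$ on $\{d\le A\}$ passes to $W_1$ directly; both work.
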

\begin{proof}
We only need to prove that ``(ii) $\Rightarrow$ (i)''. Let $\tilde{d}=\min(d,1)$, and denote by $\tilde{W}_p$ the classical Wasserstein distances defined with $\tilde{d}$ and $\tilde{\cw}_p$ the generalized one defined in terms of $\tilde{W}_p$ (see Definition \ref{d2}). We first show that
$$\cw_p(\cp_n,\cp)\rightarrow 0\Longleftrightarrow \tilde{\cw}_p(\cp_n,\cp)\rightarrow 0,$$
where the necessity is directly deduced from $\cw_p\geq\tilde{\cw}_p$.\\

To prove the sufficiency, we first see that the distance function $d$ is dominated by the sum of three parts, that is, for $K\geq 1$, $\omega$, $\bar{\omega}$ and $\omega_0\in \Omega$:
$$d(\omega,\bar{\omega})\leq d(\omega,\bar{\omega})\wedge K+2d(\omega,\omega_0)\mathbf{1}_{\{d(\omega,\omega_0)\geq\frac{K}{2}\}}+2d(\bar{\omega},\omega_0)\mathbf{1}_{\{d(\bar{\omega},\omega_0)\geq \frac{K}{2}\}}.$$
Then, there exists a constant $C_p$ only depending on $p$ such that
$$d(\omega,\bar{\omega})^p\leq C_p(K^p \tilde{d}(\omega,\bar{\omega})^p+d(\omega,\omega_0)^p\mathbf{1}_{\{d(\omega,\omega_0)\geq\frac{K}{2}\}}+d(\bar{\omega},\omega_0)^p\mathbf{1}_{\{d(\bar{\omega},\omega_0)\geq\frac{K}{2}\}}).$$
By the same argument in the proof of Theorem 7.12 in Villani \cite{villani1}, we have for two probability measures $\mu$ and $\nu$,
\begin{align*}
W_p(\mu,\nu)^p\leq C_pK^p\tilde{W}_p(\mu,\nu)^p&+C_pE_\mu[d(\omega_0,\cdot)^p\mathbf{1}_{\{\omega:\ d(\omega_0,\omega)\geq\frac{K}{2}\}}(\cdot)]\\
&+C_pE_\nu[d(\omega_0,\cdot)^p\mathbf{1}_{\{\omega:\ d(\omega_0,\omega)\geq\frac{K}{2}\}}(\cdot)],
\end{align*}
which implies that
\begin{align*}\cw_p(\cp_n,\cp)^p\leq C_pK^p\tilde{\cw}_p(\cp_n,\cp)^p&+C_p\be^{\cp_n}[d(\omega_0,\cdot)^p\mathbf{1}_{\{\omega:\ d(\omega_0,\omega)\geq\frac{K}{2}\}}(\cdot)]\\&+C_p\be^{\cp}[d(\omega_0,\cdot)^p\mathbf{1}_{\{\omega:\ d(\omega_0,\omega)\geq\frac{K}{2}\}}(\cdot)].\end{align*}
First letting $\nn$ and then letting $K\rightarrow\infty$, we have $\cw_p(\cp_n,\cp)\rightarrow 0$.\\

Since the distance function $\tilde{d}$ is bounded by 1, all the distances $\tilde{W}_p$ are equivalent (see \S 7.1.2 in \cite{villani1}), so does $\tilde{\cw}_p$. Therefore, we have
$$\cw_p(\cp_n,\cp)\rightarrow 0\ \ \Longleftrightarrow\ \ \tilde{\cw}_p(\cp_n,\cp)\rightarrow 0\ \ \Longleftrightarrow\ \ \tilde{\cw}_1(\cp_n,\cp)\rightarrow 0\ \ \Longleftrightarrow\ \ \cw_1(\cp_n,\cp)\rightarrow 0.$$
\end{proof}
\begin{theorem}[Wasserstein distance metrize weak convergence]\label{ch4t2}
Let $p\geq 1$. Suppose $\{\cp_n\}_{n=1}^\infty$ and $\cp$ are elements in $\cp_p(\Omega)$. Then the following statements are equivalent:\\
\noindent(i) $\cw_p(\cp_n,\cp)\rightarrow 0$.\\
\noindent(ii) For each $\vp\in C(\Omega)$ with the growth condition: for some $\omega_0\in\Omega$, $|\vp(\omega)|\leq C(1+d(\omega_0,\omega)^p)$, $\forall \omega\in \Omega$, we have
$$\lim_{\nn}\be^{\cp_n}[\vp]=\be^\cp[\vp].$$
\noindent(iii) $\{\be^{\cp_n}[\cdot]\}_{n=1}^\infty$ converges weakly to $\be^{\cp}[\cdot]$ and (\ref{e1}) holds.
\end{theorem}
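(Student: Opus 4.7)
The plan is to take (iii) as the hub and establish (i) $\Leftrightarrow$ (iii) and (ii) $\Leftrightarrow$ (iii) separately, drawing on the duality formula (Theorem \ref{ch2t1}), the tightness statement of Proposition \ref{p-2}, and the reduction of Proposition \ref{p1} between $\cw_p$- and $\cw_1$-convergence under (\ref{e1}).

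The equivalence (ii) $\Leftrightarrow$ (iii) is the easier pair. The direction (ii) $\Rightarrow$ (iii) is immediate: weak convergence on $C_b(\Omega)$ is a special case of (ii), and (\ref{e1}) follows by applying (ii) to a continuous cutoff dominating $d(\omega_0,\cdot)^p\mathbf{1}_{\{d\geq K\}}$ together with the tail condition for $\cp\in\cp_p(\Omega)$. For (iii) $\Rightarrow$ (ii), I would multiply $\vp$ by a cutoff $h_K\in C_b$ equal to $1$ on $\{d\leq K\}$: weak convergence handles $\be^{\cp_n}[\vp h_K]$ for each fixed $K$, while the tail $\vp(1-h_K)$ is controlled uniformly in $n$ by (\ref{e1}) for $\cp_n$ and by the moment condition for $\cp$.

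For (i) $\Rightarrow$ (iii), the ordering $\cw_1\leq\cw_p$ combined with the duality formula gives uniform convergence $\sup_{\|\vp\|_{\mathrm{Lip}}\leq 1}|\be^{\cp_n}[\vp]-\be^{\cp}[\vp]|\to 0$. To extend this to all of $C_b$, I would derive tightness of $\bigcup_n\cp_n$ from $\cw_1\to 0$ by pairing each $\mu_n\in\cp_n$ with a near-$W_1$-optimal $\nu_n\in\cp$ and exploiting the Lipschitz-test-function estimate $\mu_n(F_\delta^c)\leq\nu_n(F^c)+W_1(\mu_n,\nu_n)/\delta$ applied to the finite covers furnished by tightness of $\cp$; then I would approximate $\vp\in C_b$ uniformly on the resulting compact by a Lipschitz function. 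For (\ref{e1}), a coupling argument suffices: the triangle-inequality splitting $\{d(\omega,\omega_0)\geq K\}\subset\{d(\omega,\bar\omega)\geq K/2\}\cup\{d(\bar\omega,\omega_0)\geq K/2\}$ bounds the $\mu_n$-tail at level $K$ by $W_p(\mu_n,\nu_n)^p$ plus the $\nu_n$-tail at level $K/2$, which is uniformly small since $\cp\in\cp_p(\Omega)$.

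The main obstacle is (iii) $\Rightarrow$ (i). Proposition \ref{p1} reduces this to $\cw_1(\cp_n,\cp)\to 0$, i.e., by duality, the uniform vanishing of $\sup_{\|\vp\|_{\mathrm{Lip}}\leq 1}|\be^{\cp_n}[\vp]-\be^{\cp}[\vp]|$. I would normalize $\vp(\omega_0)=0$ so that $|\vp|\leq d(\omega_0,\cdot)$, and truncate $\vp_M=(\vp\wedge M)\vee(-M)$: the truncation error is bounded by $\be^{\cp_n}[d(\omega_0,\cdot)\mathbf{1}_{\{d\geq M\}}]+\be^{\cp}[d(\omega_0,\cdot)\mathbf{1}_{\{d\geq M\}}]$, which tends to $0$ as $M\to\infty$ uniformly in $n$ by (\ref{e1}) (H\"older's inequality converts the $L^p$ tail into an $L^1$ tail when $p>1$). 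For the bounded Lipschitz remainder, Proposition \ref{p-2} supplies tightness of $\bigcup_n\cp_n$; on a fixed tight compact the family $\{\vp_M\}$ is equicontinuous and uniformly bounded, so by Arzel\`a--Ascoli it is relatively compact in the uniform topology. A finite $\ve$-net then reduces the supremum to finitely many pointwise statements, each granted by the weak convergence in (iii), which yields uniform convergence after controlling the mass off the tight compact via $\overline{\cp_n}(K^c)$. The interplay of truncation, tightness from Proposition \ref{p-2}, and Arzel\`a--Ascoli is the technical heart.
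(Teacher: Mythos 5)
Your proposal is correct, and for the hardest implication, (iii) $\Rightarrow$ (i), it takes a genuinely different route from the paper. The paper argues by contradiction on each half of the Hausdorff-type maximum separately: it extracts a subsequence $\mu_{n_k}\in\cp_{n_k}$ (resp.\ $\mu_{n_k}^*\in\cp$) staying $W_1$-far from the other set, uses the tightness of Proposition \ref{p-2}(ii) and Prokhorov to pass to a weak limit $\mu^*$, and then identifies $\mu^*$ as an element of $\cp$ via Corollary \ref{cor1} (for part one) or derives $E_{\mu^*}[\vp]\geq\be^{\cp}[\vp]+\ve$ via a direct application of the minimax theorem (for part two). You instead work on the \emph{test-function} side: truncation plus the tail condition (\ref{e1}) reduces the Lipschitz ball to a uniformly bounded equicontinuous family, Arzel\`a--Ascoli and Proposition \ref{p-2}(ii) convert the pointwise convergence of (iii) into uniform convergence over that family, and the duality formula of Theorem \ref{ch2t1} then delivers $\cw_1(\cp_n,\cp)\to 0$ directly, with Proposition \ref{p1} upgrading to $\cw_p$. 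Both arguments ultimately rest on Sion's minimax theorem (yours only through Theorem \ref{ch2t1}; the paper's also through Corollary \ref{cor1}), but yours is constructive and quantifies the rate in terms of the finite net, at the cost of the Arzel\`a--Ascoli bookkeeping. A smaller divergence occurs in (i) $\Rightarrow$ (iii): to pass from Lipschitz to $C_b$ test functions the paper sandwiches $\vp$ between monotone sequences of bounded Lipschitz functions and invokes the downward continuity of $\be^{\cp}$ on weakly compact $\cp$ (Theorem 31 of \cite{DHP}), whereas you re-derive tightness of $\bigcup_n\cp_n$ from $\cw_1\to 0$; your route works, but be aware that the $\delta$-neighbourhood of a compact set need not be compact in a general Polish space, so you genuinely need the finite-ball-cover version you allude to, treating the finitely many small $n$ by tightness of each individual $\cp_n$ --- the paper's monotone-approximation route sidesteps this entirely. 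Your treatment of (ii) $\Leftrightarrow$ (iii) matches the paper's in substance.
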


\begin{proof} First, we prove the equivalence of (ii) and (iii).

 ``(ii) $\Rightarrow$ (iii)": It is easy to see that $\{\be^{\cp_n}[\cdot]\}_{n=1}^\infty$ converges weakly to $\be^{\cp}[\cdot]$, thus we only need to prove (\ref{e1}) holds. Indeed, the function $f_K(\omega): =d(\omega_0,\omega)^p\mathbf{1}_{\{d(\omega_0,\omega)\geq K\}}$ is upper semi-continuous and thus, can be approximated by $\vp_m\downarrow f_K$ with $|\vp_m(\omega)|\leq C(1+d(\omega_0,\omega)^p)$. Then we have
$$\limsup_{\nn}\be^{\cp_n}[f_K]\leq\limsup_{\nn}\be^{\cp_n}[\vp_m]=\be^{\cp}[\vp_m],\ \forall m\in\mathbb{N}.$$
Letting $m\rightarrow \infty$,
$$\limsup_{\nn}\be^{\cp_n}[f_K]\leq\be^{\cp}[f_K].$$
Since $\cp\in\cp_p(\Omega)$, from Definition \ref{def24} (b), we conclude (\ref{e1}).\\

``(iii) $\Rightarrow$ (ii)": Giving $\vp\in C(\Omega)$ with the growth condition $|\vp(\omega)|\leq C(1+d(\omega_0,\omega)^p)$, let
$$\vp_K=(-C(1+K^p)\vee\vp)\wedge C(1+K^p)$$
 and $\psi_K=\vp-\vp_K$. Then,
\begin{align*}
|\be^{\cp_n}[\vp]-\be^{\cp}[\vp]|\leq |\be^{\cp_n}[\vp_K]-\be^{\cp}[\vp_K]|&+|\be^{\cp_n}[\psi_K]|+|\be^{\cp}[\psi_K]|\\
\leq |\be^{\cp_n}[\vp_K]-\be^{\cp}[\vp_K]|&+ C\be^{\cp_n}[d(\omega_0,\cdot)^p\mathbf{1}_{\{\omega:\ d(\omega_0,\omega)\geq K\}}(\cdot)]\\
&+C\be^\cp[d(\omega_0,\cdot)^p\mathbf{1}_{\{\omega:\ d(\omega_0,\omega)\geq K\}}(\cdot)].
\end{align*}
Therefore,
\begin{align*}\limsup_{\nn}|\be^{\cp_n}[\vp]-\be^{\cp}[\vp]|\leq &C\limsup_{\nn}\be^{\cp_n}[d(\omega_0,\cdot)^p\mathbf{1}_{\{\omega:\ d(\omega_0,\omega)\geq K\}}(\cdot)]\\
&+C\be^\cp[d(\omega_0,\cdot)^p\mathbf{1}_{\{\omega:\ d(\omega_0,\omega)\geq K\}}(\cdot)].\end{align*}
Letting $K\rightarrow\infty$, the first term of the right-hand side goes to 0 by (\ref{e1}), the second one goes to 0 since $\cp\in\cp_p(\Omega)$. Thus (ii) holds.\\

Now, we prove ``(i) $\Rightarrow$ (iii)''. If $\mathcal{W}_p(\cp_n,\cp)\rightarrow 0$, then $\cw_1(\cp_n,\cp)\rightarrow 0$. By Theorem \ref{ch2t1}, we have
$$\sup_{||\vp||_{\rm{Lip}\leq 1}}\{|\be^{\cp_n}[\vp]-\be^{\cp}[\vp]|\}\rightarrow 0,$$
which implies that $\lim_{\nn}\be^{\cp_n}[\vp]=\be^\cp[\vp]$ holds for all Lipschitz functions. For each bounded continuous function $\vp$, there exist sequences of bounded Lipschitz functions $\{\underline{\vp}_k\}_{k=1}^\infty$ and $\{\overline{\vp}_k\}_{k=1}^\infty$ such that
$$\underline{\vp}_k\uparrow \vp \ \ \text{and} \ \ \overline{\vp}_k\downarrow\vp.$$
Then, we can deduce
$$\limsup_{\nn}\be^{\cp_n}[\vp]\leq\liminf_{k\rightarrow\infty}\limsup_{\nn}\be^{\cp_n}[\overline{\vp}_k]=\liminf_{k\rightarrow\infty}\be^\cp[\overline{\vp}_k]=\be^\cp[\vp].$$
Similarly, we have
$$\liminf_{\nn}\be^{\cp_n}[\vp]\geq\limsup_{k\rightarrow\infty}\limsup_{\nn}\be^{\cp_n}[\underline{\vp}_k]=\limsup_{k\rightarrow\infty}\be^\cp[\underline{\vp}_k]=\be^\cp[\vp].$$
In conclusion, $\be^{\cp_n}[\cdot]$ converges weakly to $\be^{\cp}[\cdot]$. \\

It only remains to be seen whether (\ref{e1}) holds. Indeed, one can first verify that for each $\overline{\omega}\in \Omega$,
$$d(\omega_0,\omega)\mathbf{1}_{\{d(\omega_0,\omega)\geq K\}}\leq d(\omega_0,\overline{\omega})\mathbf{1}_{\{d(\omega_0,\overline{\omega})\geq \frac{K}{2}\}}+2d(\omega,\overline{\omega}),$$
and immediately obtain
$$d(\omega_0,\omega)^p\mathbf{1}_{\{d(\omega_0,\omega)\geq K\}}\leq 2^{p-1}d(\omega_0,\overline{\omega})^p\mathbf{1}_{\{d(\omega_0,\overline{\omega})\geq \frac{K}{2}\}}+2^{2p-1} d(\omega,\overline{\omega})^p.$$
Then we can deduce that for any $\mu_n\in\cp_n$ and $\mu\in\cp$,
$$E_{\mu_n}[d(\omega_0,\cdot)^p\mathbf{1}_{\{\omega:\ d(\omega_0,\omega)\geq K\}}(\cdot)]\leq 2^{p-1}E_\mu[d(\omega_0,\cdot)^p\mathbf{1}_{\{\omega:\ d(\omega_0,\omega)\geq \frac{K}{2}\}}(\cdot)]+2^{2p-1}W_p(\mu_n,\mu)^p,$$
which implies that
$$\be^{\cp_n}[d(\omega_0,\cdot)^p\mathbf{1}_{\{\omega:\ d(\omega_0,\omega)\geq K\}}(\cdot)]\leq2^{p-1}\be^{\cp}[d(\omega_0,\cdot)^p\mathbf{1}_{\{\omega:\ d(\omega_0,\omega)\geq \frac{K}{2}\}}(\cdot)]+2^{2p-1}\cw_p(\cp_n,\cp)^p,$$
where the first term of the right-hand side goes to 0 since $\cp\in\cp_p(\Omega)$, and the second one vanishes as a result of (i). Consequently, (\ref{e1}) holds. \\

Finally, we show (iii) $\Rightarrow$ (i). Thanks to Proposition \ref{p1}, we only need to prove the case when $p=1$. By the definition of $\mathcal{W}_1$, it suffices to prove that
$$\sup_{\mu_n\in\cp_n}\inf_{\mu\in\cp}W_1(\mu_n,\mu)\rightarrow 0\ \text{and} \ \sup_{\mu\in\cp}\inf_{\mu_n\in\cp_n}W_1(\mu_n, \mu)\rightarrow 0.$$
\noindent (a) Suppose $\sup_{\mu_n\in\cp_n}\inf_{P\in\cp}W_1(\mu_n,\mu)\nrightarrow 0$, then there exists an $\ve>0$ and a subsequence $\{\cp_{n_k}\}_{k=1}^\infty\subset \{\cp_{n}\}_{n=1}^\infty$, such that for each $n_k$, there exists a $\mu_{n_k}\in\cp_{n_k}$ with
\begin{equation}\label{con}
\inf_{\mu\in\cp}W_1(\mu_{n_k},\mu)\geq \ve,
\end{equation}
which implies that for all $\mu\in\cp, W_1(\mu_{n_k},\mu)\geq\ve$.
By Proposition \ref{p-2}, the set $\bigcup_{n=1}^\infty \cp_n$ is tight, so it is relatively weakly compact by Prokhorov's theorem. Then, there exists a subsequence of $\{\mu_{n_k}\}_{k=1}^\infty$, still denoted by $\{\mu_{n_k}\}_{k=1}^\infty$, which converges weakly to a probability measure $\mu^*$. In this case, we can apply the classical result for Wasserstein distance to obtain $W_1(\mu_{n_k},\mu^*)\rightarrow 0$.
\noindent On the other hand, for each $\vp\in C_b(\Omega)$,
$$E_{\mu^*}[\vp]=\lim_{k\rightarrow\infty}E_{\mu_{n_k}}[\vp]\leq\lim_{k\rightarrow\infty}\be^{\cp_{n_k}}[\vp]=\be^{\cp}[\vp],$$
\noindent which implies from Corollary \ref{cor1} that $\mu^*\in\cp$. Therefore, we obtain immediately that
$$\inf_{\mu\in\cp}W_1(\mu_{n_k},\mu)\leq W_1(\mu_{n_k},\mu^*)\rightarrow 0,\ \text{as} \ k\rightarrow\infty,$$
which is in contradiction to (\ref{con}).\\[6pt]
\noindent (b) Suppose $\sup_{\mu\in\cp}\inf_{\mu_n\in\cp_n}W_1(\mu_n, \mu)\nrightarrow 0$, then there exist an $\ve>0$, a sequence $\{\mu_{n_k}^*\}_{k=1}^\infty\subset\cp$ and a subsequence $\{\cp_{n_k}\}_{k=1}^\infty\subset \{\cp_{n}\}_{n=1}^\infty$, such that
$$\inf_{\mu_{n_k}\in\cp_{n_k}}W_1(\mu_{n_k}, \mu_{n_k}^*)\geq4\ve.$$
\noindent Similarly to (a), one can find a subsequence of $\{\mu_{n_k}^*\}_{k=1}^\infty$, still denoted by $\{\mu^*_{n_k}\}_{k=1}^\infty$, which converges weakly to a probability measure $\mu^*\in\cp$. For sufficient large $k$,
$$\inf_{\mu_{n_k}\in\cp_{n_k}}W_1(\mu_{n_k}, \mu^*)\geq 3\ve.$$
\noindent From the classical duality formula for the Kantorovich-Rubinstein distance (\ref{dua}), we have
$$\inf_{\mu_{n_k}\in\cp_{n_k}}W_1(\mu_{n_k}, \mu^*)=\inf_{\mu_{n_k}\in\cp_{n_k}}\sup_{||\vp||_{\text{\rm{Lip}}\leq 1}}\{E_{\mu^*}[\vp]-E_{\mu_{n_k}}[\vp]\}.$$
\noindent Since $\cp_{n_k}$ and $\Phi:=\{\vp:||\vp||_{\rm{Lip}\leq 1}\}$ are convex as well as $\cp_{n_k}$ is weakly compact, the minimax theorem implies
\begin{align*}\inf_{\mu_{n_k}\in\cp_{n_k}}\sup_{||\vp||_{\rm{Lip}\leq 1}}\{E_{\mu^*}[\vp]-E_{\mu_{n_k}}[\vp]\}&=\sup_{||\vp||_{\rm{Lip}\leq 1}}\inf_{\mu_{n_k}\in\cp_{n_k}}\{E_{\mu^*}[\vp]-E_{\mu_{n_k}}[\vp]\}\\
&=\sup_{||\vp||_{\rm{Lip}\leq 1}}\{E_{\mu^*}[\vp]-\be^{\cp_{n_k}}[\vp]\}\geq 3\ve.
\end{align*}
Therefore, thanks to (\ref{e1}), one can find a $\vp\in C_{b}(\Omega)$, such that for sufficient large $k$,
$$E_{\mu^*}[\vp]\geq\be^{\cp_{n_k}}[\vp]+2\ve\geq\be^{\cp}[\vp]+\ve,$$
which is in contradiction to $\mu^*\in\cp$.\hfill$\square$
\end{proof}
\begin{corollary}\label{cor}
Let $p\geq 1$. Suppose that $\cp_n$ and $\cp$ be sets of probability measures on $(\Omega, d)$ satisfying all conditions in Definition \ref{def24} expect the assumption of convexity. If $\cw_p(\cp_n,\cp)\rightarrow 0$, then (ii) and (iii) in Theorem \ref{ch4t2} still hold.
\end{corollary}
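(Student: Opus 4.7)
The plan is to trace through the proof of Theorem \ref{ch4t2} and isolate the single place where convexity of $\cp_n$ and $\cp$ is actually used, then replace it by a convexity-free estimate. Since only the implications (i) $\Rightarrow$ (ii), (iii) are being claimed, and since the equivalence (ii) $\Leftrightarrow$ (iii) in Theorem \ref{ch4t2} rests only on the weak compactness (condition (a) of Definition \ref{def24} without convexity) and the tail condition (b)---the argument there uses only Lemma 8 of \cite{DHP} and monotone approximations of upper-semicontinuous functions, never a minimax theorem---it suffices to establish (i) $\Rightarrow$ (iii).

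The only step in the original proof of (i) $\Rightarrow$ (iii) that invokes convexity is the appeal to the duality formula of Theorem \ref{ch2t1}, whose proof in turn relies on the minimax Theorem \ref{minimax}. To bypass this, I would verify the one-sided estimate
$$\sup_{||\vp||_{\rm{Lip}\leq 1}}|\be^{\cp_1}[\vp]-\be^{\cp_2}[\vp]|\leq \cw_1(\cp_1,\cp_2),$$
which holds without any convexity assumption. Indeed, for any $\mu\in\cp_1$, $\nu\in\cp_2$, and any $\vp$ with $||\vp||_{\rm{Lip}}\leq 1$, the classical Kantorovich-Rubinstein duality (\ref{dua}) applied to the single pair $(\mu,\nu)$ gives $E_\mu[\vp]-E_\nu[\vp]\leq W_1(\mu,\nu)$. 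Passing to $\inf_\nu$ then $\sup_\mu$ on the right, and freely exchanging $\sup_\mu$ with $\sup_\vp$ on the left (which requires no minimax), yields $\sup_{\vp}(\be^{\cp_1}[\vp]-\be^{\cp_2}[\vp])\leq \sup_{\mu\in\cp_1}\inf_{\nu\in\cp_2}W_1(\mu,\nu)$; the symmetric estimate handles the absolute value.

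Armed with this inequality, the rest of the (i) $\Rightarrow$ (iii) argument in the proof of Theorem \ref{ch4t2} transfers verbatim: reduce from general $p$ to $p=1$ via Proposition \ref{p1} (whose proof uses only the elementary inequality relating $d$ and $\tilde d$, never convexity); deduce $\be^{\cp_n}[\vp]\to\be^\cp[\vp]$ for Lipschitz $\vp$ from the inequality above; extend to $\vp\in C_b(\Omega)$ by the usual monotone sandwich $\underline{\vp}_k\uparrow\vp$ and $\overline{\vp}_k\downarrow\vp$ combined with the lower continuity of $\be^\cp$ on $C_b(\Omega)$ guaranteed by weak compactness alone; and finally derive (\ref{e1}) from $\cw_p(\cp_n,\cp)\to 0$ using only the elementary pointwise bound on $d(\omega_0,\cdot)^p\mathbf{1}_{\{d(\omega_0,\cdot)\geq K\}}$ and the definition of $\cw_p$. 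The only point to double-check---and the only place where a subtle pitfall could arise---is that Corollary \ref{cor1}, which does use convexity via Theorem \ref{minimax}, is not implicitly invoked; inspection of the original proof shows it appears only in the converse direction (iii) $\Rightarrow$ (i), which the corollary does not claim.
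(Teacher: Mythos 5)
Your proof is correct. It rests on the same key observation as the paper's --- that the one-sided inequality $\sup_{||\vp||_{\rm Lip}\leq 1}|\be^{\cp_1}[\vp]-\be^{\cp_2}[\vp]|\leq\cw_1(\cp_1,\cp_2)$ requires no convexity, since it uses only the classical pairwise duality and the elementary direction $\inf\sup\geq\sup\inf$ --- but the two arguments diverge after that point. The paper passes to the convex hulls $\cp_n^*$, $\cp^*$, identifies $\sup_{\vp}|\be^{\cp_n}[\vp]-\be^{\cp}[\vp]|$ with $\cw_1(\cp_n^*,\cp^*)$ via Theorem \ref{ch2t1}, checks that the hulls inherit (\ref{e1}), and then invokes Theorem \ref{ch4t2} wholesale for the convex sets; since $\be^{\cp_n^*}[\cdot]=\be^{\cp_n}[\cdot]$ and $\be^{\cp^*}[\cdot]=\be^{\cp}[\cdot]$, statements (ii) and (iii) transfer back. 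You instead re-run the proof of (i) $\Rightarrow$ (iii) directly on the non-convex sets, verifying that every step other than the duality formula is convexity-free; your audit of where convexity actually enters (only through Theorem \ref{minimax}, hence Theorem \ref{ch2t1} and Corollary \ref{cor1}, the latter appearing only in the unclaimed direction (iii) $\Rightarrow$ (i)) is accurate, as is your observation that the equivalence (ii) $\Leftrightarrow$ (iii) never uses convexity. Your route is a little longer to write out but sidesteps a point the paper leaves implicit, namely that the (closed) convex hull of a weakly compact set of measures is again weakly compact, which is needed for $\cp_n^*,\cp^*\in\cp_p(\Omega)$. One cosmetic remark: the passage from $\cw_p(\cp_n,\cp)\rightarrow 0$ to $\cw_1(\cp_n,\cp)\rightarrow 0$ is just the monotonicity $\cw_1\leq\cw_p$ noted after Definition \ref{d2}; Proposition \ref{p1} is not needed for that direction.
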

\begin{proof}
Let $\cp_n^*$ and $\cp^*$ be the convex hull of $\cp_n$ and $\cp$ respectively. We first observe that $\be^{\cp^*}[\vp]=\be^{\cp}[\vp]$ for all $\vp$, which implies that $\cp_n^*$, $\cp^*\in\cp_p(\Omega)$. As in the proof of above theorem, we also have
\begin{align*}\be^{\cp^*_n}[d(\omega_0,\cdot)^p\mathbf{1}_{\{\omega:\ d(\omega_0,\omega)\geq K\}}(\cdot)]&=\be^{\cp_n}[d(\omega_0,\cdot)^p\mathbf{1}_{\{\omega:\ d(\omega_0,\omega\geq K\}}(\cdot)]\\
&\leq2^{p-1}\be^{\cp}[d(\omega_0,\cdot)^p\mathbf{1}_{\{\omega:\ d(\omega_0,\omega)\geq \frac{K}{2}\}}(\cdot)]+2^{2p-1}\cw_p(\cp_n,\cp)^p,
\end{align*}
thus $\{\cp^*_n\}_{n=1}^\infty$ also satisfies (\ref{e1}).
 So it suffice to prove that $\cw_1(\cp_n^*,\cp^*)\rightarrow 0$, which yields $\cw_p(\cp_n^*,\cp^*)\rightarrow 0$ by Proposition \ref{p1}.\\

Without the assumption of convexity on the sets of probability measures, we still have for each $n$,
\begin{align*}
\sup_{\mu\in\cp_n}\inf_{\nu\in\cp}W_1(\mu,\nu)&=\sup_{\mu\in\cp_n}\inf_{\nu\in\cp}\sup_{||\vp||_{\rm{Lip}\leq 1}}\{E_{\mu}[\vp]-E_{\nu}[\vp]\}\\
&\geq\sup_{\mu\in\cp_n}\sup_{||\vp||_{\rm{Lip}\leq 1}}\inf_{\nu\in\cp}\{E_{\mu}[\vp]-E_{\nu}[\vp]\}\\
&=\sup_{||\vp||_{\rm{Lip}\leq 1}}\sup_{\mu\in\cp_n}\inf_{\nu\in\cp}\{E_{\mu}[\vp]-E_{\nu}[\vp]\}\\
&=\sup_{||\vp||_{\rm{Lip}\leq 1}}\{\be^{\cp_n}[\vp]-\be^{\cp}[\vp]\}.
\end{align*}
Similarly,
$$\sup_{\nu\in\cp}\inf_{\mu\in\cp_n}W_1(\mu,\nu)\geq\sup_{||\vp||_{\rm{Lip}\leq 1}}\{\be^{\cp}[\vp]-\be^{\cp_n}[\vp]\}.$$
\noindent Therefore,
\begin{align*}
\mathcal{W}_1(\cp_n,\cp)&\geq\sup_{||\vp||_{\rm{Lip}\leq 1}}\{|\be^{\cp_n}[\vp]-\be^{\cp}[\vp]|\}\\
&=\sup_{||\vp||_{\rm{Lip}\leq 1}}\{|\be^{\cp^*_n}[\vp]-\be^{\cp^*}[\vp]|\}=\cw_1(\cp^*_n,\cp^*).\end{align*}
From which we can imply $\cw_1(\cp_n^*,\cp^*)\rightarrow 0$.
By Theorem \ref{ch4t2}, (ii) and (iii) hold for $\be^{\cp_n^*}[\cdot]$ and $\be^{\cp^*}[\cdot]$, so do $\be^{\cp_n}[\cdot]$ and $\be^{\cp}[\cdot]$.
\end{proof}
\begin{remark}
In general, without the assumption of convexity, the statement (ii) and (iii) in Theorem \ref{ch4t2} may not imply $\cw_p(\cp_n,\cp)\rightarrow 0$. Here is a simple counterexample: consider a set $\cp$ satisfying Definition \ref{def24} except for the convexity and such that $\mathcal{W}_1(\cp,\cp^*)>0$, where $\cp^*$ is the convex hull of $\cp$.
Let $\cp_n=\cp^*$, for each $n\in \mathbb{N}$. Obviously, (ii) and (iii) in Theorem 3.4 hold since $\be^{\cp_n}[\vp]=\be^{\cp^*}[\vp]=\be^{\cp}[\vp]$, but $\mathcal{W}_1(\cp_n,\cp)\nrightarrow 0$.

\end{remark}
\noindent{\bf{Acknowledgements}} The authors gratefully acknowledge the helpful suggestions and comments from both the associate editor and the anonymous reviewer. X. Li is supported by the China Postdoctoral Science Foundation (No.2014M561907) and the Fundamental Research Funds of Shandong University (No.2014GN007); Y. Lin is supported by the European Research Council (ERC) under grant FA506041 and by the  Austrian  Science Fund  (FWF)  under  grant P25815.\\[6pt]

\end{document}